\newtheorem{theorem}{Theorem}
\newtheorem{corollary}{Corollary}
\newtheorem{lemma}{Lemma}
\theoremstyle{remark}
\newtheorem{remark}{Remark}
\newtheorem{problem}{Problem}
\newtheorem{example}{Example}
\begin{document}
\baselineskip17pt

\title[Competition]{Does a population with the highest turnover coefficient  win competition?}
\author[R. Rudnicki]{Ryszard Rudnicki}
\address{R. Rudnicki, Institute of Mathematics,
Polish Academy of Sciences, Bankowa 14, 40-007 Katowice, Poland.}
\email{rudnicki@us.edu.pl}
\keywords{nonlinear Leslie model, competitive exclusion, periodic cycle, 	population dynamics}
\subjclass[2010]{Primary: 92D25; Secondary: 37N25, 92D40}


\begin{abstract}
We consider a discrete time competition model.
Populations  compete for common limited resources 
but they have different fertilities and mortalities rates.
We compare dynamical properties of this model  with its continuous counterpart.
We give sufficient conditions for competitive exclusion and the existence of periodic solutions 
related to the classical logistic, Beverton-Holt  and Ricker   models.
\end{abstract}

\maketitle

\section{Introduction}
\label{s:int}

It is well known that if species  compete for common limited resources, then usually they cannot coexist in the  long term.
This law was introduced by Gause \cite{Gause} and it is called \textit{the principle of competitive exclusion}.
There are a lot of papers where the problem of  competitive exclusion or  coexistence is discussed. Most of them are described by continuous time models, but 
there are also some number of discrete time models devoted to this subject  (see \cite{AS,CLCH} and the references given there).    
If we have continuous and discrete time versions of a similar competition model
 it is interesting to compare the properties of 
both versions of the model, especially to check if   
they are dynamically consistent,  i.e.,   if they  possess the same dynamical properties as stability or chaos.
In this paper we consider  a discrete time competition model with overlapping generations. We prove a sufficient condition  for competitive exclusion 
and compare it with its continuous counterpart. 
   
The model considered here is the following. A population consists of  $k$ different individual strategies. 
The state of the population at time $t$ is given  by the
vector $\mathbf x(t)=[x_{1}(t),\dots ,x_{k}(t)]$, where $x_{i}(t)$ is the
size of subpopulation with the phenotype $i$. 
Individuals with different phenotypes do not mate and 
each phenotype $i$ is characterized by per capita reproduction $b_i$ and mortality $d_i$.
We assume that  the  juvenile survival rate depends on the state $\mathbf x$ 
and it  is given by a function $f\colon \mathbb R^k_+ \to [0,1]$.
Therefore,  $f$ describes the suppression of growth driven, for example, by competition for food or 
free nest sites for newborns.
The time evolution of the state of  population 
is given by the system   
\begin{equation}
\label{d-m}
x_i(t+1)=x_i(t)-d_ix_i(t)+ b_i x_i(t)f(\mathbf x(t)).
\end{equation}  
We assume that $0<d_i\le 1$, $d_i<b_i$ for $i=1,\dots,k$ and $f(\mathbf x)>0$ for $\mathbf x\ne 0$.
The model is similar in spirit to that presented in \cite{AB}  (a continuous version) and in \cite{AR} (a discrete version) but
in those papers  $f$ has a special form strictly connected with competition for  free nest sites for newborns.
A simplified Leslie/Gower model \cite{AlSR} is also of the form~ (\ref{d-m}).
The suppression function $f$ can be quite arbitrary. 
Usually, it is of the form $f(\mathbf x)=\varphi (w_1x_1+\dots+w_kx_k)$, 
where $\varphi$ is a decreasing function and $w_1,\dots,w_k$ are positive numbers,
but e.g. in \cite{AB} it is of the form  
\[
f(\mathbf x)=
\begin{cases}
1,&\textrm{if  $\,(1+b_1)x_1+\dots+ (1+b_k)x_k\le K$},\\
 \dfrac{K-x_1-\dots-x_k}{b_1x_1+\dots+b_kx_k},
&\textrm{if  $\,(1+b_1)x_1+\dots +(1+b_k)x_k> K$}.
\end{cases}    
\]

Now we present some motivation for studying model (\ref{d-m}). 
We begin with a continuous time version of  it.
The time evolution of the state of population 
is described by the system   
\begin{equation}
\label{c-m}
x_i'(t)=-d_ix_i(t)+ b_i x_i(t)f(\mathbf x(t)),
\quad i=1,\dots,k. 
\end{equation}
We assume that  $0<d_i< b_i$,  $f$ has values in the interval $[0,1]$,
and 
\begin{equation}
\label{b:f}
f(\mathbf x) \le \min\bigg\{\frac{d_i}{b_i}
\colon \,\, i=1,\dots,k\bigg\}
\quad\textrm{if $|\mathbf x|\ge M$},
\end{equation}
where $|\mathbf x|=x_1+\dots+x_k$.
From the last condition it follows  that the total size $|\mathbf x(t)|$ 
of the population is bounded by $\max(M,|\mathbf x(0)|)$.
We also assume that $f$ is  
a sufficiently smooth
function to have the  existence and uniqueness of the solutions of (\ref{c-m}), for example it is enough to assume that $f$ satisfies a local Lipschitz condition.  
    
We denote by $L_i=b_i/d_i$ the turnover coefficient for the strategy $i$. 
We assume that
\begin{equation}
\label{ineq}
L_1>L_2\ge\dots \ge L_k.
\end{equation}
It is well known  that 
\begin{equation}
\label{goto0}
\lim_{t\to\infty} x_i(t)=0 \textrm{ \ for $i\ge 2$}.    
\end{equation}
Indeed, from (\ref{c-m}) it follows that 
\begin{equation}
\label{goto0-2}
(b_i^{-1}\ln x_i(t))'= -L_i^{-1}+f(\mathbf x(t)).    
\end{equation}
Thus 
\begin{equation}
\label{goto0-3}
(b_1^{-1}\ln x_1(t)-b_i^{-1}\ln x_i(t))'= L_i^{-1}-L_1^{-1}.    
\end{equation}
Therefore
\begin{equation}
\label{goto0-4}
\frac{d}{dt} \ln \bigg( \frac{x_1(t)^{b_i}}{x_i(t)^{b_1}}\bigg) = b_1b_2(L_i^{-1}-L_1^{-1})>0    
\end{equation}
and, consequently, 
\begin{equation}
\label{goto0-5}
\lim_{t\to\infty}\frac{x_1(t)^{b_i}}{x_i(t)^{b_1}}=\infty.
\end{equation}
Since $x_1(t)$ is a bounded function, from (\ref{goto0-5}) it follows that (\ref{goto0}) holds.

Now we return to a discrete time version of  model (\ref{c-m}). 
From (\ref{b:f}) it follows immediately that $x_i(t+1)\le x_i(t)$ if $|\mathbf x(t)|\ge M$ and, therefore,
the sequence $(x_i(t))$ is bounded.   Moreover, since $d_i \le 1$ we have $x_i(t)>0$ if $x_i(0)>0$.  
It is of interest to know whether   (\ref{goto0}) holds also
for discrete model  (\ref{d-m}).
Observe that (\ref{d-m})  can be written as 
\begin{equation}
\label{d-m1}
\frac{x_i(t+1)-x_i(t)}{b_ix_i(t)}= -L_i^{-1}+ f(\mathbf x(t)).
\end{equation} 
Now   (\ref{goto0-3})  takes the form
\begin{equation}
\label{d-m2}
 \frac{1}{b_1}D_l\, x_1(t)-\frac{1}{b_i}D_l\, x_i(t) = L_i^{-1}-L_1^{-1}.    
\end{equation}
In the last formula  the \textit{logarithmic derivative} $x_i'/x_i$ was replaced by its discrete version
\[
D_l\, x_i(t) :=\dfrac{x_i(t+1)-x_i(t)}{x_i(t)}.
\]
Let $\alpha=b_1/b_i$ and $\beta=b_1(L_i^{-1}-L_1^{-1})=\alpha d_i-d_1$. Then $0<\beta<\alpha$ and
 (\ref{d-m2}) can be written in the following way
\begin{equation}
\label{d-m3}
D_l\, x_1(t)= \alpha D_l\, x_i(t) +\beta.    
\end{equation}
We want to find a sufficient condition for (\ref{goto0}). 
In order to do it we formulate the following general question,
which can be investigated independently of the above biological models. 

\begin{problem}
\label{prob1}
Find parameters $\alpha$ and $\beta$, $0<\beta<\alpha$, such that the following condition 
holds:\\
(C) \ if $(u_n)$  and $(v_n)$ are arbitrary bounded sequences  of positive numbers  
 satisfying
\begin{equation}
\label{p1}
\frac{u_{n+1}-u_n}{u_n}= \alpha \frac{v_{n+1}-v_n}{v_n} +\beta,    
\end{equation}
for $n\in\mathbb{N}$, then $\lim\limits_{n\to\infty} v_n=0$.
\end{problem}

In the case when the model has the property of competitive exclusion (\ref{goto0}) one can ask if the dynamics of the $k$-dimensional model 
is the same as in the restriction to the one-dimensional model. 
The answer to this question is positive for the continuous version,
because the one-dimensional model has a very simple dynamics. In Section~\ref{ss:one} we also show that both dynamics are similar if 
the one-dimensional model has the shadowing property. More interesting question is what can happen when condition (C) does not hold.
One can expect  that then subpopulations with different strategies can coexist even if condition (\ref{ineq}) holds. 
But we do not have a coexistence equilibrium 
(i.e. a positive stationary solution of (\ref{d-m})) which makes the problem 
more difficult. In Section~\ref{ss:periodic} we check that two-dimensional systems  with  $f$ related to the classical logistic,
Beverton-Holt  and Ricker models can have periodic solutions even in the case
 when one-dimensional versions of these models have  
stationary globally stable solutions and the two-dimensional model has  a locally stable boundary equilibrium $(x_1^*,0)$. 

\section{Competitive exclusion}
\label{s:mr}
The solution of Problem~\ref{prob1} is formulated in the following theorem.
  
\begin{theorem}
\label{th1}
If \ $\alpha\le 1+\beta$  then condition (C) is fulfilled.
If $\alpha> 1+\beta$
then  we can find  periodic sequences $(u_n)$ and $(v_n)$ of period two with positive elements
which satisfy $(\ref{p1})$.
\end{theorem}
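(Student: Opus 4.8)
The plan is to rewrite condition $(\ref{p1})$ as a multiplicative recursion and then to monitor a single weighted ratio of the two sequences. Writing $r_n=v_{n+1}/v_n>0$ and $s_n=u_{n+1}/u_n>0$ (positivity is automatic), condition $(\ref{p1})$ becomes
\[
s_n=\alpha r_n+c,\qquad c:=1+\beta-\alpha ,
\]
so that $\alpha+c=1+\beta$. Everything below is read off from this identity together with boundedness of $(u_n)$.

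For the first assertion I would assume $\alpha\le 1+\beta$, i.e.\ $c\ge 0$, and set $\gamma:=\alpha/(1+\beta)$; then $\gamma\in(0,1]$ and $1-\gamma=c/(1+\beta)\ge 0$. The one computational input is the inequality
\[
s_n=\alpha r_n+c=(1+\beta)\bigl(\gamma r_n+(1-\gamma)\bigr)\ge (1+\beta)\,r_n^{\,\gamma},
\]
which is just the weighted AM--GM inequality $\gamma r_n+(1-\gamma)\cdot 1\ge r_n^{\,\gamma}\cdot 1^{\,1-\gamma}$. Now I would introduce $w_n:=u_n v_n^{-\gamma}>0$; dividing the previous line by $v_n^{\,\gamma}$ gives $w_{n+1}=w_n\,s_n r_n^{-\gamma}\ge(1+\beta)\,w_n$, hence $w_n\ge(1+\beta)^n w_0$. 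Since $(u_n)$ is bounded, say $u_n\le B$, we get $v_n^{-\gamma}=w_n/u_n\ge(1+\beta)^n w_0/B\to\infty$ because $1+\beta>1$; as $\gamma>0$ this forces $v_n\to 0$, which is exactly (C). (Note that only boundedness of $(u_n)$ enters here.)

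For the second assertion I would assume $\alpha>1+\beta$, so $c<0$ and $\alpha>1$, and look for a $2$-periodic solution of the form $(v_n)=(1,r,1,r,\dots)$, $(u_n)=(1,s,1,s,\dots)$ with $r,s>0$. Evaluating $(\ref{p1})$ at an even and at an odd index yields exactly the two equations $s=\alpha r+c$ and $s^{-1}=\alpha r^{-1}+c$; eliminating $s$ leaves $(\alpha r+c)(\alpha r^{-1}+c)=1$, that is,
\[
r+r^{-1}=Q:=\frac{1-\alpha^2-c^2}{\alpha c}.
\]
Because $\alpha>1$ the numerator is negative and $\alpha c<0$, so $Q>0$; and $Q\ge 2$ is equivalent to $(\alpha+c)^2\ge 1$, which holds since $\alpha+c=1+\beta$ and is strict as $\beta>0$. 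Thus $Q>2$ and $r+r^{-1}=Q$ has a root $r>1$. Setting $s:=\alpha r+c$, the relation $s(\alpha r^{-1}+c)=1$ shows the two factors have the same sign, and they cannot both be negative, since that would force $\alpha^{2}<c^{2}$ whereas $\alpha>-c=\alpha-1-\beta$; hence $s>0$. The resulting sequences are positive, $2$-periodic (hence bounded) and satisfy $(\ref{p1})$, while $v_n$ alternates between $1$ and $r>1$.

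The only genuinely non-routine point is guessing the exponent $\gamma=\alpha/(1+\beta)$ that turns the affine identity $s_n=\alpha r_n+c$ into the geometric-growth estimate $s_n\ge(1+\beta)r_n^{\,\gamma}$; once that is in hand both directions are short. An alternative for the first part, summing $\ln s_n-\ln r_n=\ln(\alpha+c/r_n)$, runs smoothly only when $\alpha>1$, whereas the weighted ratio $w_n$ covers all $\alpha\le 1+\beta$ uniformly. For the converse the one thing to check with care is the positivity of $r$ and $s$.
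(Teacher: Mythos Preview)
Your proof is correct. The second half (the period-two construction when $\alpha>1+\beta$) is essentially identical to the paper's: both solve $(\alpha\theta+\gamma)(\alpha\theta^{-1}+\gamma)=1$ for $\theta>1$ via $\theta+\theta^{-1}=(\alpha^2+\gamma^2-1)/(\alpha|\gamma|)>2$, and your positivity check for $s$ is a welcome explicit detail the paper leaves implicit.

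The first half is where you diverge. The paper proves a separate lemma by Lagrange multipliers, namely that $\prod_{i=1}^n(\alpha x_i+\gamma)\ge(\alpha m^{1/n}+\gamma)^n$ on the constraint set $x_1\cdots x_n=m$, and then argues by contradiction: if $\limsup v_n>0$, pick a subsequence $v_{n_i}\ge\overline m$, write $u_{n_i}=u_0\prod_{j\le n_i}(\alpha x_j+\gamma)$ with $x_j=v_j/v_{j-1}$, apply the lemma, and use $\overline m^{1/n_i}\to 1$ together with $\alpha+\gamma>1$ to force $u_{n_i}\to\infty$. Your route replaces the constrained-minimum lemma by the single term-wise estimate $s_n=(1+\beta)\bigl(\gamma r_n+(1-\gamma)\bigr)\ge(1+\beta)r_n^{\gamma}$ from weighted AM--GM, and then telescopes via the Lyapunov-type quantity $w_n=u_n v_n^{-\gamma}$. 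This is strictly more elementary (no multipliers, no subsequence), gives the full limit $v_n\to 0$ directly rather than through a $\limsup$ contradiction, and makes transparent why only boundedness of $(u_n)$ is needed---a point the paper records only afterwards as a remark. What the paper's lemma buys in exchange is sharpness: it identifies the exact minimiser $x_1=\dots=x_n=m^{1/n}$, whereas your AM--GM bound is generally not tight; but that extra precision is not used in the argument. A trivial cosmetic point: since the sequences are indexed from $n=1$, your ``$w_0$'' should read $w_1$, with $w_n\ge(1+\beta)^{n-1}w_1$.
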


\begin{lemma}
\label{l:c}
Consider the function
\begin{equation}
\label{l-1}
g_n (x_1,\dots,x_n)=(\alpha x_1+\gamma)(\alpha x_2+\gamma)\cdots(\alpha x_n+\gamma)
\end{equation}
defined on the set 
$S_{n,m}=\{\mathbf x\in \mathbb R^n_+\colon \,x_1x_2\cdots x_n=m\}$,
where $\alpha>0$, $\gamma\ge 0$, $m>0$ and $n$ is a positive integer. 
Then 
\begin{equation}
\label{l-2}
g_n (x_1,\dots,x_n)\ge \left(\alpha m^{1/n}+\gamma \right)^n.
\end{equation}
\begin{proof}
The case $\gamma=0$ is obvious, so we assume that $\gamma>0$.
We use the standard technique of 
the Lagrange multipliers for investigating problems on conditional extrema.
Let
\[
L(x_1,\dots,x_n)=g_n (x_1,\dots,x_n)+\lambda(m-x_1x_2\cdots x_n).
\]
Then 
\[
\frac{\partial L(x_1,\dots,x_n)}{\partial x_i}
=\frac{\alpha g_n (x_1,\dots,x_n)}{\alpha x_i+\gamma}
-\frac{\lambda x_1x_2\cdots x_n}{x_i}.
\]
Observe that if 
$\dfrac{\partial L(x_1,\dots,x_n)}{\partial x_i}=0$ for $i=1,\dots,n$,
then $x_1=\dots=x_n=m^{1/n}$. It means that the function $g_n$
has only one  local conditional extremal point
and this point is the global  minimum because $g_n(\mathbf x)$ converges to infinity as  $\|\mathbf x\|\to \infty$.
\end{proof}

\end{lemma}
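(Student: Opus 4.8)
The plan is to reduce inequality (\ref{l-2}) to a one-variable convexity statement, so that it follows from Jensen's inequality directly, without any compactness or global-minimum argument. Since both sides of (\ref{l-2}) are positive, it suffices to prove the inequality after taking logarithms, and under logarithms the constraint $x_1x_2\cdots x_n=m$ defining $S_{n,m}$ becomes the \emph{linear} constraint $\sum_{i=1}^n \ln x_i=\ln m$. This makes the substitution $y_i=\ln x_i$ natural: the problem turns into minimizing $\sum_{i=1}^n h(y_i)$, where $h(y)=\ln(\alpha e^{y}+\gamma)$, over all $\mathbf y\in\mathbb R^n$ with $\sum_{i=1}^n y_i=\ln m$. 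I would first dispose of the trivial case $\gamma=0$, in which $g_n(\mathbf x)=\alpha^n x_1\cdots x_n=\alpha^n m=(\alpha m^{1/n})^n$, so (\ref{l-2}) holds with equality, and then assume $\gamma>0$.

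The key step is to verify that $h(y)=\ln(\alpha e^{y}+\gamma)$ is convex on $\mathbb R$. A direct computation gives $h'(y)=\dfrac{\alpha e^{y}}{\alpha e^{y}+\gamma}$ and hence $h''(y)=\dfrac{\alpha\gamma e^{y}}{(\alpha e^{y}+\gamma)^2}$, which is nonnegative precisely because $\alpha>0$ and $\gamma\ge 0$. Convexity of $h$ then yields, via the finite form of Jensen's inequality, that $\frac1n\sum_{i=1}^n h(y_i)\ge h\!\left(\frac1n\sum_{i=1}^n y_i\right)$ for any reals $y_1,\dots,y_n$.

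Applying this with $y_i=\ln x_i$ and using $\sum_{i=1}^n y_i=\ln m$ (so that the argument on the right is $h(\tfrac1n\ln m)=\ln(\alpha m^{1/n}+\gamma)$) gives $\frac1n\sum_{i=1}^n \ln(\alpha x_i+\gamma)\ge \ln(\alpha m^{1/n}+\gamma)$; multiplying by $n$ and exponentiating recovers (\ref{l-2}). I expect the convexity check, i.e.\ pinning down the sign of $h''$, to be the main (and essentially the only) obstacle, and it is exactly the hypothesis $\gamma\ge 0$ that makes $h''\ge 0$. The equality case $x_1=\dots=x_n=m^{1/n}$ matches the equality case in Jensen. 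The advantage of this route is that, unlike the Lagrange-multiplier argument, it produces the inequality pointwise and so needs no separate verification that the unique critical point is a global minimum on the non-compact surface $S_{n,m}$.
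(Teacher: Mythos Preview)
Your proof is correct and takes a genuinely different route from the paper's. The paper uses Lagrange multipliers: it computes the stationary points of $g_n$ on the constraint surface $S_{n,m}$, observes that the only critical point is $x_1=\dots=x_n=m^{1/n}$, and then argues that this must be a global minimum because $g_n(\mathbf x)\to\infty$ as $\|\mathbf x\|\to\infty$ along $S_{n,m}$. Your approach instead makes the substitution $y_i=\ln x_i$, checks that $h(y)=\ln(\alpha e^{y}+\gamma)$ is convex (via $h''(y)=\alpha\gamma e^{y}/(\alpha e^{y}+\gamma)^2\ge 0$), and applies Jensen's inequality to the linear constraint $\sum y_i=\ln m$. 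The Jensen route is more elementary and, as you note, gives the inequality directly without needing to justify that a unique critical point on a non-compact level set is in fact a global minimum; the paper's argument requires one to observe that on $S_{n,m}$ with $\gamma>0$ every factor of $g_n$ is at least $\gamma$, so one factor diverging forces $g_n\to\infty$, which is true but left implicit. Both arguments use the hypothesis $\gamma\ge 0$ in exactly the same place (to rule out the minimum occurring at infinity, respectively to guarantee convexity), and both recover the equality case $x_1=\dots=x_n=m^{1/n}$.
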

\begin{proof}[Proof  of Theorem~\ref{th1}] 
Equation (\ref{p1}) can be written in the following form  
\begin{equation}
\label{p2}
\frac{u_{n+1}}{u_n}= \alpha \frac{v_{n+1}}{v_n} +\gamma,    
\end{equation}
where $\gamma=\beta+1-\alpha$. 
Consider the case $\alpha\le 1+\beta$. Then $\gamma\ge 0$ and $\alpha+\gamma=\beta+1>1$.
We show that if  $(v_n)$ is a sequence of positive numbers such that $\limsup\limits_{n\to\infty} v_n>0$ and $(u_n)$
is a sequence of positive numbers such that (\ref{p2}) holds, 
then the sequence $(u_n)$ is unbounded.
Indeed, then we can find an $\overline{m}>0$ and a subsequence  
$(v_{n_i})$ of $(v_n)$ such that  $v_{n_i}\ge \overline m$ for $i\in\mathbb N$.
We set  $v_0=1$ and  $x_i=v_{i}/v_{i-1}$ for $i\in\mathbb N$. Then 
$v_{n}=x_1\cdots x_n$ and $u_n=u_0g_n(x_1,\dots,x_n)$, where 
$u_0=u_1/(\alpha v_1+\gamma)$. 
If  $m=x_1\cdots x_{n_i}$, then $m\ge \overline m$ and from
Lemma~\ref{l:c} it follows that
\[
u_{n_i}=u_0g_{n_i}(x_1,\dots,x_{n_i})\ge 
u_0\left(\alpha m^{1/n_i}+\gamma \right)^{n_i}\ge 
u_0\left(\alpha \overline m^{1/n_i}+\gamma \right)^{n_i}.
\]
Since $\lim\limits_{i\to\infty}\overline m^{1/n_i}=1$ and $\alpha+\gamma>1$
we obtain
$\lim\limits_{i\to\infty} u_{n_i}=\infty$, which proves the first part of the theorem. 
Now we assume that $\alpha> 1+\beta$. Then $\gamma<0$.
First we check that there exists $\theta>1$ such that
\begin{equation}
\label{per1} 
(\alpha \theta+\gamma)(\alpha \theta^{-1}+\gamma)=1.
\end{equation}
Equation (\ref{per1}) can be written in the following form
\begin{equation}
\label{per3} 
\theta+\theta^{-1}=L, \textrm{  \, where $L=\frac{\alpha^2+\gamma^2-1}{\alpha|\gamma|}$}.
\end{equation}
Since $\alpha+\gamma=\beta+1>1$ we have
$\alpha^2+\gamma^2-1> 2\alpha|\gamma|$, 
which  gives $L>2$ and implies that  there exists $\theta>1$ such that 
(\ref{per1}) holds.
Now we put
$u_{2n-1}=c_1$, $u_{2n}=c_1(\alpha \theta+\gamma)$,
$v_{2n-1}=c_2$,  $v_{2n}=c_2\theta$ 
for $n\in\mathbb N$, where $c_1$ and $c_2$ are any positive constants. Then
\[
\frac{u_{2n}}{u_{2n-1}}= \alpha \theta+\gamma=\alpha\frac{v_{2n}}{v_{2n-1}}+\gamma,
\]
and using (\ref{per1}) we obtain
\[
\frac{u_{2n+1}}{u_{2n}}= \frac1{\alpha \theta+\gamma}
=\alpha \theta^{-1}+\gamma=\alpha\frac{v_{2n+1}}{v_{2n}}+\gamma.
\qedhere
\]
\end{proof} 
\begin{remark}
\label{r:1}
We have proved 
a slightly  stronger condition than (C) in the case $\alpha\le 1+\beta$. Namely, 
if $(u_n)$ is a bounded sequence  of positive numbers, 
$(v_n)$ is a  sequence  of positive numbers and they satisfy
(\ref{p1}), then $\lim\limits_{n\to\infty} v_n=0$.
In the proof of condition (C) we have not used the preliminary assumption that $\beta<\alpha$.
\end{remark}

\section{Applications}
\label{s:appl}
Now we return to the model given  by (\ref{d-m}).
We assume that $f\colon \mathbb R^k_+\to [0,1]$ is a continuous function
which satisfies (\ref{b:f}). From (\ref{b:f}) it follows that there exists $\overline{M}>0$
such that the set    
\[
X=\{\mathbf x\in\mathbb R_+^k\colon  x_1+\dots+x_k\le \overline M\} 
\]  
is invariant under (\ref{d-m}), i.e., if 
$\mathbf x(0)\in X$ then $\mathbf x(t)\in X$ for $t>0$. We  restrict the domain of the model   
to the set $X$.  Let $T\colon X\to X$ be the transformation given by 
$T_i(\mathbf x)=(1-d_i)x_i+b_if(\mathbf x)x_i$, for $i=1,\dots,k$.
 
\subsection{Persistence}
\label{ss:persistence}
First we check that if $f(\mathbf  0)=1$ then the population is \textit{persistent}, i.e.,
 $\liminf_{n \to\infty}\|T^n(\mathbf x)\|\ge \varepsilon_1 >0$ for all $\mathbf x\ne \mathbf 0$. 
This is a standard result from persistence theory but we check it to make the paper self-contained.
 Since $b_i>d_i$ for  $i=1,\dots,k$ we find $\varepsilon>0$ and   $\delta>0$ 
such that 
\begin{equation}
\label{ej-f-p}
T_i(\mathbf x)\ge  (1+\delta)x_i \quad\textrm{for $i=1,\dots,k$ and $\mathbf x \in B(\mathbf 0,\varepsilon)$}, 
\end{equation}
where $B(\mathbf 0,\varepsilon)$ denotes the open ball in $X$ with center $\mathbf  0$ and radius $\varepsilon$.
Moreover, since $T(\mathbf x)\ne \mathbf 0$ for $\mathbf x\ne 0$ the closed set $T(X\setminus B(\mathbf 0,\varepsilon))$ is  disjoint with some 
neighbourhood of $\mathbf 0$. Using (\ref{ej-f-p}) we find $\varepsilon_1\in (0,\varepsilon)$ such that 
for each $\mathbf x \ne \mathbf 0$ we also find   an integer  $n_0(\mathbf x)$ such
that $T^n(\mathbf x)\notin  B(\mathbf 0,\varepsilon_1)$ for $n\ge n_0(\mathbf x)$.

\subsection{Convergence to one-dimensional dynamics}
\label{ss:one}
Now we present some corollaries of Theorem~\ref{th1} concerning the long-time  behaviour of the population.  
The inequality $0<\alpha\le 1+\beta$ can be written in terms of birth and death coefficients as
\begin{equation}
\label{a:1} 
b_1(1-d_i)\le b_i(1-d_1)\quad\textrm{for $i=2,\dots,k$.} 
\end{equation}
It means that if (\ref{ineq}) and (\ref{a:1}) hold, then
all strategies except the first one become extinct.
It suggests that the model should behave asymptotically as $t\to\infty$, like a one-dimensional model corresponding 
to a population consisting of only the first strategy. This reduced model is given by the 
recurrent formula 
\begin{equation}
\label{d-m-a}
y(t+1)=S(y(t)),
\end{equation} 
where $S(y)=y-d_1y+ b_1yf(y,0,\dots,0)$.

In order to check that the model  given  by (\ref{d-m}) has the same asymptotic behaviour 
as the transformation $S$, we need some auxiliary definitions.
A sequence $(y_k)$ is called an $\eta$-\textit{pseudo-orbit} of a transformation $S$ if
$|S(y_k)- y_{k+1}|<\eta$ 
for all $k\ge 1$. The transformation $S$ is called \textit{shadowing}, if for
every $\delta>0$ there exists
$\eta>0$ such that for each 
$\eta$-pseudo-orbit $(y_k)$ of $S$ 
there is a point $y$ such that 
$|y_k -S^k(y)|<\delta$.

\begin{theorem}
\label{th-as}
Assume that  $f(\mathbf 0)=1$ and that conditions  $(\ref{ineq})$, $(\ref{a:1})$ hold.
Then 
\[
\lim_{t\to\infty} x_i(t)=0 \ \textrm{ for $\,i=2,\dots,k$}.
\] 
If $S$ is shadowing then for each $\delta>0$ 
and for each initial point $\mathbf x(0)=(x_1,\dots,x_k)$
with $x_1>0$ there exists  $t_0\ge 0$  such that
$y(t_0)>0$ and 
\begin{equation}
\label{wn-sh}
\big|
x_1(t)
- y(t)
\big|
<\delta
\textrm{ \ for $t\ge t_0$}. 
\end{equation}
\end{theorem}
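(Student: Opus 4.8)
The plan is to split the argument into two independent parts, mirroring the two claims in the statement. For the first claim, I would verify that the hypotheses reduce exactly to the setting of Problem~\ref{prob1} along each pair of strategies $(1,i)$. Fix $i\in\{2,\dots,k\}$ and set $u_n=x_1(n)$, $v_n=x_i(n)$. By the persistence argument of Section~\ref{ss:persistence} (which uses $f(\mathbf 0)=1$) and the fact that $d_j\le 1$, both sequences are bounded and, since $x_1(0)>0$, $x_i(0)>0$, strictly positive for all $n$. Writing the model in the form (\ref{d-m3}) with $\alpha=b_1/b_i$, $\beta=\alpha d_i-d_1$ gives precisely (\ref{p1}). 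Condition (\ref{a:1}) is exactly $\alpha\le 1+\beta$, so Theorem~\ref{th1} (first part) yields $\lim_{t\to\infty}v_n=0$, i.e. $\lim_{t\to\infty}x_i(t)=0$. One technical point: if $x_i(0)=0$ the conclusion is trivial, so we may assume $x_i(0)>0$; the case $x_1(0)=0$ is excluded by hypothesis for the second claim, but for the first claim the same reasoning applies whenever $x_1(0)>0$, and if $x_1(0)=0$ the system collapses to a lower-dimensional one to which induction on $k$ applies. I would state this reduction cleanly rather than belabor it.

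For the second claim, the idea is that once the tail coordinates $x_2,\dots,x_k$ are small, the orbit of $T$ traced by the first coordinate is an $\eta$-pseudo-orbit of $S$, and shadowing then produces a genuine $S$-orbit staying $\delta$-close. Concretely: given $\delta>0$, pick $\eta>0$ from the shadowing property of $S$. Since $f$ is continuous on the compact set $X$ and $x_1(t)$ is bounded, there is $\varepsilon>0$ such that $|f(x_1,x_2,\dots,x_k)-f(x_1,0,\dots,0)|$ is small enough to make $|T_1(\mathbf x)-S(x_1)|=b_1x_1\,|f(\mathbf x)-f(x_1,0,\dots,0)|<\eta$ whenever $x_2+\dots+x_k<\varepsilon$. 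By the first part there is $t_0$ with $x_2(t)+\dots+x_k(t)<\varepsilon$ for all $t\ge t_0$. Then $(x_1(t_0+k))_{k\ge 0}$ is an $\eta$-pseudo-orbit of $S$, so there exists $y$ with $|x_1(t_0+k)-S^k(y)|<\delta$ for all $k$; relabeling, set $y(t)=S^{t-t_0}(y)$ so that (\ref{wn-sh}) holds. Finally, to get $y(t_0)>0$: if $\delta$ is large this is immediate from $x_1(t)\ge\varepsilon_1-\delta$ being useless, so instead note we may first run the first-part argument far enough that $x_1(t)$ stays above the persistence floor $\varepsilon_1$ (Section~\ref{ss:persistence}), and shrink $\delta$ below $\varepsilon_1$ if necessary, or simply observe that the shadowing point $y$ can be taken in the forward-invariant region where $S>0$; I would choose the initial $t_0$ large enough that $x_1(t_0)>\varepsilon_1>\delta$, forcing $y(t_0)=y>0$.

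The main obstacle I anticipate is the uniformity in the pseudo-orbit estimate: shadowing requires a single $\eta$ working for \emph{all} indices, so the bound $|T_1(\mathbf x(t))-S(x_1(t))|<\eta$ must hold for every $t\ge t_0$ simultaneously. This is where compactness of $X$ and uniform continuity of $f$ do the work — the factor $b_1 x_1(t)$ is uniformly bounded by $b_1\overline M$, and $x_2(t)+\dots+x_k(t)\to 0$ gives a uniform tail bound past $t_0$ — so the estimate is genuinely uniform, but it must be phrased carefully. A secondary subtlety is that shadowing as defined only guarantees closeness, not that the shadowing orbit lies in $X$ or in the domain where $S$ is defined; since $S$ is continuous on all of $[0,\overline M]$ (or extends naturally), and $x_1(t)\in[0,\overline M]$, the shadowing point $y$ will automatically be near this interval, and positivity is secured by the persistence lower bound as above. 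The rest is routine.
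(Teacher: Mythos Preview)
Your proposal is correct and follows essentially the same route as the paper: the first claim is reduced to Theorem~\ref{th1} via the rewriting (\ref{d-m1})--(\ref{d-m3}) (with $\alpha=b_1/b_i$, $\beta=\alpha d_i-d_1$, so that (\ref{a:1}) is exactly $\alpha\le 1+\beta$), and the second claim is obtained by showing that, once $x_2(t)+\dots+x_k(t)<\varepsilon$, the sequence $(x_1(t))_{t\ge t_0}$ is an $\eta$-pseudo-orbit of $S$ thanks to uniform continuity of $f$ on the compact invariant set $X$ and the uniform bound $b_1x_1\le b_1\overline M$. The paper's proof is terser---it does not spell out the case $x_i(0)=0$ or $x_1(0)=0$ for the first claim, nor does it verify $y(t_0)>0$ for the second---so your extra care on these points (invoking persistence to force $x_1(t_0)>\varepsilon_1$ and then, after replacing $\delta$ by $\min(\delta,\varepsilon_1)$, concluding $y(t_0)>0$) is a genuine, if minor, improvement rather than a deviation. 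One small correction: persistence as stated in Section~\ref{ss:persistence} bounds $\|T^n(\mathbf x)\|$ from below, not $x_1$ alone; you should note that once $x_2(t)+\dots+x_k(t)$ is small this forces $x_1(t)$ itself to be bounded below, which is what you need.
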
 

\begin{proof}
Let us fix a $\delta>0$ and let $\eta>0$ be a constant from the shadowing property of $S$.
From the uniform continuity of the function $f$  
there is an $\varepsilon>0$ such that 
\begin{equation}
\label{a:t1}
\overline M b_1\big|f(x_1,\dots,x_k)-f(x_1,0,\dots,0)
\big|
<\eta
\textrm{ if $\,\mathbf x\in X$, $x_2+\dots+x_k<\varepsilon$.}
\end{equation}
Since
all strategies except the first one become extinct,
there exists $t_0\ge 0$ such that $x_2(t)+\dots+x_k(t)<\varepsilon$ for $t\ge t_0$.  
From (\ref{a:t1}) it follows that
\[
|x_1(t+1)-S(x_1(t))|<\eta  \textrm{ \ for $t\ge t_0$}
\]
and, consequently, 
 the sequence $x_1(t_0),x_1(t_0+1),\dots$ is 
 an $\eta$-pseudo-orbit.
Since $S$ is shadowing we have
(\ref{wn-sh}).  
\end{proof}

The shadowing property was intensively studied for the last thirty years 
and there are a lot of results concerning the shadowing property for
one-dimensional maps (cf. a survey paper by Ombach and Mazur \cite{OmbachMazur}).
It is obvious that if $S$ has an asymptotically stable periodic orbit then
$S$ is shadowing on the basin of attraction of this orbit.
Moreover, for a continuous one-dimensional  transformation 
the convergence of all iterates to a unique fixed point $x$ implies its global stability
 \cite{Sedeghat}. 
 Thus, as a simple consequence of Theorem~\ref{th-as} we obtain
\begin{corollary}
\label{cor-as}
Assume that  $f(\mathbf 0)=1$ and that conditions  $(\ref{ineq})$, $(\ref{a:1})$ hold.
If $S$ has a fixed point $x_*>0$ and $\lim\limits_{n\to\infty}S^n(x)=x_*$ for all $x>0$,
then for each initial point $\mathbf x(0)=(x_1,\dots,x_k)$
with $x_1>0$, we have 
$\lim\limits_{t\to\infty}\mathbf x(t)=(x_*,0,\dots,0)$.
\end{corollary}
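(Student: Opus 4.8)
The plan is to deduce the corollary directly from Theorem~\ref{th-as}. That theorem already yields $\lim_{t\to\infty}x_i(t)=0$ for $i=2,\dots,k$ under exactly the present hypotheses, so the whole task reduces to proving $\lim_{t\to\infty}x_1(t)=x_*$. For this I want to invoke the second half of Theorem~\ref{th-as}, which requires knowing that $S$ is shadowing; the hypothesis only gives pointwise convergence of every orbit of $S$ to $x_*$, so the first step is to upgrade this to the shadowing property.

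First I would argue that $x_*$ is globally asymptotically stable for $S$ on $(0,\overline M]$. Since $S$ is continuous (because $f$ is) and every orbit starting at a point $x>0$ converges to the unique positive fixed point $x_*$, the result of \cite{Sedeghat} quoted above gives that $x_*$ is a locally, hence globally, asymptotically stable fixed point of $S$, with basin of attraction $(0,\overline M]$; note that $0$ is also a fixed point of $S$, so the basin is not the whole interval $[0,\overline M]$. As recalled in the text, an asymptotically stable periodic orbit forces $S$ to be shadowing on its basin of attraction, and a fixed point is a period-one orbit; hence $S$ is shadowing on $(0,\overline M]$. Because $d_1\le 1$ and $X$ is invariant, the sequence $x_1(0),x_1(1),\dots$ stays in $(0,\overline M]$ whenever $x_1(0)>0$, so Theorem~\ref{th-as} is applicable.

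Now fix $\delta>0$ and apply Theorem~\ref{th-as} with $\delta$ replaced by $\delta/2$: there is a $t_0\ge 0$ with $y(t_0)>0$ and $|x_1(t)-y(t)|<\delta/2$ for all $t\ge t_0$, where $y(t)=S^{t-t_0}(y(t_0))$. Since $y(t_0)>0$, the hypothesis gives $\lim_{t\to\infty}y(t)=x_*$, so there is $t_1\ge t_0$ with $|y(t)-x_*|<\delta/2$ for $t\ge t_1$. The triangle inequality then yields $|x_1(t)-x_*|<\delta$ for all $t\ge t_1$. As $\delta>0$ was arbitrary, $\lim_{t\to\infty}x_1(t)=x_*$, and combining this with $\lim_{t\to\infty}x_i(t)=0$ for $i\ge 2$ gives $\lim_{t\to\infty}\mathbf x(t)=(x_*,0,\dots,0)$.

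The argument is essentially bookkeeping on top of Theorem~\ref{th-as}; the only genuine point is the implication ``all orbits of $S$ converge to $x_*$'' $\Rightarrow$ ``$S$ is shadowing,'' which is handled by combining \cite{Sedeghat} with the elementary fact that an asymptotically stable periodic orbit is shadowed on its basin. The one mild subtlety worth checking is that $0$ is also a fixed point of $S$, so one must make sure the pseudo-orbit extracted from $x_1(t)$ never degenerates to $0$; this is precisely the content of the clause $y(t_0)>0$ in Theorem~\ref{th-as}, and it also follows from the persistence statement of Section~\ref{ss:persistence} together with $x_i(t)\to 0$ for $i\ge 2$.
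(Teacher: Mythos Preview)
Your argument is correct and follows exactly the route the paper intends: use \cite{Sedeghat} to upgrade global attraction of $x_*$ to global asymptotic stability, invoke the remark that an asymptotically stable periodic orbit makes $S$ shadowing on its basin, and then feed this into Theorem~\ref{th-as}. Your write-up is in fact more careful than the paper's own one-line justification, in particular in flagging the role of persistence in keeping the pseudo-orbit $(x_1(t))$ away from the repelling fixed point $0$.
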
 
Some applications of shadowing to semelparous population 
similar to Theorem~\ref{th-as} and Corollary~\ref{cor-as}     
can be found in \cite{RudnickiWieczorek2010}.
An interested reader will find there also some observations concerning     
chaotic behaviour of such models. In particular, 
the model given  by (\ref{d-m}) can exhibit chaotic behaviour if 
the suppression function is of the form
$f(\mathbf x)=1-x_1-\dots-x_n$, i.e., it is a generalization of the logistic model. 

\begin{remark}[Dynamical consistence]
\label{r:d-c}
If we replace $x'_i(t)$ with   $(x_i(t+h)-x_i(t))/h$ in  (\ref{c-m}) then we get 
\begin{equation}
\label{d-mh}
x_i(t+h)=x_i(t)-d_ihx_i(t)+ b_ih x_i(t)f(\mathbf x(t)),\quad i=1,\dots,k.
\end{equation}
One can ask if this scheme is dynamically consistent with (\ref{c-m}).
Observe, that inequalities (\ref{ineq}) also hold if we replace  $b_i$ with $b_ih$ and $d_i$ with $d_ih$.
The difference equation  (\ref{d-mh}) is said to be \textit{dynamically consistent} with 
(\ref{c-m}) if they  possesses the same dynamical
behavior such as local stability, bifurcations, and chaos \cite{LE},
or more specifically \cite{Mickens,RG} if they have the same given property,
e.g. if the competitive exclusion takes place in both discrete and continuous models.
The model  (\ref{d-mh}) is biologically meaningful only if the death coefficients are $\le 1$, i.e., if
\begin{equation}
\label{w:h}    
0<h\le h_{\max{}}=\min \{d_1^{-1},\dots,d_k^{-1}\}.  
\end{equation}
We assume that $b_i$ and $d_i$ satisfy (\ref{ineq}), i.e., $b_1d_i>b_id_1$ for $i=2,\dots,k$.
Let $b_{i,h}=b_ih$, $d_{i,h}=d_ih$.
Now, (\ref{a:1}) applied to $b_{i,h}$ and $d_{i,h}$ gives
\begin{equation}
\label{a:1-r} 
b_1-b_i\le (b_1d_i-b_id_1)h\quad\textrm{for $i=2,\dots,k$}.
\end{equation}
In particular if (\ref{ineq}) holds and $b_i\ge b_1$ for $i=2,\dots,k$ then  for all $h$ satisfying (\ref{w:h})
all strategies except the first one become extinct, i.e., the difference equation (\ref{d-mh}) is dynamically consistent  with 
(\ref{c-m}) with respect to this property. We cannot expect ``full" dynamical consistence if the above conditions hold,  because in the case of the logistic map,
 i.e., if $f(\mathbf x)=1-x_1-x_2$, the stationary point $\mathbf  x_1^*=((b_1-d_1)d_1^{-1},0)$ of  (\ref{c-m}) is globally stable but in the numerical scheme 
 (\ref{d-mh}) this point  loses stability when $b_1h>2+d_1h$.
\end{remark}

\subsection{Periodic solutions}
\label{ss:periodic}
Theorem~\ref{th1} can be also useful if we look for periodic oscillation in 
the model  given  by (\ref{d-m}). 
We restrict our investigation to the two-dimensional model. 
We recall that
if  $\alpha> 1+\beta$, then 
the periodic sequences  given by 
$u_{2n-1}=c_1$,
$u_{2n}=c_1(\alpha \theta+\gamma)$,
$v_{2n-1}=c_2$,  $v_{2n}=c_2 \theta$  
 for $n\in\mathbb N$, satisfy $(\ref{p1})$.
Here $c_1$ and $c_2$ are any positive constants,
$\theta>1$ is a solution of the equation
\[
(\alpha \theta+\gamma)(\alpha \theta^{-1}+\gamma)=1,
\]
$\alpha=b_1/b_2$, $\beta=\alpha d_2-d_1>0$, and $\gamma=1+\beta-\alpha=\alpha(d_2-1)+(1-d_1)<0$.
Under these assumptions we are looking for $c_1, c_2>0$ such that
\begin{equation}
\left\{
\label{ukl-2'}
\begin{aligned}
&\theta=1-d_2+b_2f(c_1,c_2),
\\
&1=\theta(1-d_2)+b_2 \theta  f(c_1(\alpha \theta+\gamma),c_2\theta).
\end{aligned}
\right.
\end{equation}
This system is equivalent to
\begin{equation}
\label{ukl-3'}
\left\{
\begin{aligned}
&f(c_1,c_2)=\left(\theta+d_2-1\right)b_2^{-1}
\\
&f(c_1(\alpha \theta+\gamma),c_2\theta)
=\left(\theta^{-1}+d_2-1\right)b_2^{-1}.
\end{aligned}
\right.
\end{equation}
Since $f(\mathbf  x)\in (0,1)$ for $\mathbf x\in X\setminus\{\mathbf 0\}$,
we have the following necessary condition for the existence of positive solution 
of the system (\ref{ukl-3'}) : 
 \begin{equation}
\label{e:wko}
\theta<1+b_2-d_2\quad\textrm{and}\quad \theta<(1-d_2)^{-1}.
\end{equation}

Let $f(\mathbf x)=\varphi(x_1+x_2)$,  where
$\varphi$ is a strictly decreasing function defined on the interval $[0,K)$, $0<K\le \infty$,  
such that $\varphi(0)=1$ and $\lim_{x\to K}\varphi(x)=0$.   
Define
$m_1=\left( \theta+d_2-1\right)b_2^{-1}$, 
 $m_2=\left(\theta^{-1}+d_2-1\right)b_2^{-1}$, $p_1=\varphi^{-1}(m_1)$,
 and $p_2=\varphi^{-1}(m_2)$. If (\ref{e:wko}) holds then the constants $p_1$, $p_2$
 are well defined and $0<p_1<p_2$. 
Thus, we find a positive solution of  system (\ref{ukl-3'}) if and only if
(\ref{e:wko}) holds  and  
 \begin{equation}
\label{e:wkw}
c_1+c_2=p_1 \quad\textrm{and}\quad
c_1(\alpha \theta+\gamma)+c_2\theta=p_2.
\end{equation}
System (\ref{e:wkw}) has a unique solution
 \begin{equation}
\label{e:wkw2}
c_1=
 \frac{p_2-p_1\theta}
{\alpha \theta+\gamma-\theta},
\quad
c_2=
 \frac{p_1(\alpha \theta+\gamma)-p_2}
{\alpha \theta+\gamma-\theta}.
\end{equation}
Since $\alpha>1$, $\theta>1$, and $\beta>0$ we have
\[
\alpha \theta+\gamma-\theta=\alpha \theta+1+\beta-\alpha-\theta=(\alpha-1)(\theta-1)+\beta>0.
\]
Thus 
system (\ref{ukl-3'}) has a positive solution if and only if  
(\ref{e:wko}) holds and
\begin{equation}
\label{e:wkw3}
p_1\theta< p_2<p_1(\alpha \theta+\gamma).
\end{equation}

Now we show how to find parameters $b_1,b_2,d_1,d_2$ such that 
(\ref{e:wko}) and (\ref{e:wkw3}) hold.
 Assume that  $\beta$ is sufficiently small.
Since  $\gamma=\beta+1-\alpha$, from (\ref{per3}) it follows
\[
\theta+\theta^{-1}=\frac{2\alpha(\alpha-1)-2(\alpha-1)\beta+\beta^2}{\alpha(\alpha-1)-\alpha\beta}=
2+\frac{2\beta}{\alpha(\alpha-1)}+O(\beta^2).
\] 
Let $\theta=1+\varepsilon$. Then
 $\varepsilon=\sqrt{2/(\alpha^2-\alpha)}\,\sqrt{\beta}+O(\beta) $
 and we can assume that
$\varepsilon$ is  also sufficiently small. Hence $\theta^{-1}=1-\varepsilon+O(\varepsilon^2)$,  $\alpha\theta+\gamma=1+\alpha\varepsilon+O(\varepsilon^2)$,
$m_1=(d_2+\varepsilon)b_2^{-1}$, and 
$m_2=(d_2-\varepsilon)b_2^{-1}+O(\varepsilon^2)$.
Assume that $\varphi^{-1}$ is a $C^2$-function in a neighbourhood of the point $\bar x=d_2b_2^{-1}$. Then
\begin{equation}
\label{e:wkw4}
p_1= A+Bb_2^{-1}\varepsilon+O(\varepsilon^2), \quad
p_2= A-Bb_2^{-1}\varepsilon+O(\varepsilon^2),
\end{equation}
where $A=\varphi^{-1}(\bar x)$ and $B=(\varphi^{-1})'(\bar x)=1/\varphi'(A)$.
Substituting (\ref{e:wkw4}) to (\ref{e:wkw3})  we rewrite (\ref{e:wkw3}) as 
\begin{equation}
\label{e:wkw5}
A+O(\varepsilon)<-2 Bb_2^{-1}<\alpha A+O(\varepsilon).
\end{equation}
Taking sufficiently small $\beta$,  we are also able to check 
condition  (\ref{e:wko}).  Thus, if  $A<-2 Bb_2^{-1}$, $\beta$ is sufficiently small and $\alpha$ is sufficiently large, 
both conditions (\ref{e:wko}) and (\ref{e:wkw3}) are fulfilled
and a non-trivial periodic solution exists.   

\begin{example} 
\label{ex1}
We consider  the two-dimensional model (\ref{d-m}) related to the logistic map, i.e., with 
$f(\mathbf x)=\varphi(x_1+x_2)$ and $\varphi(x)=1-x/K$ for $x\in [0,K]$ and $\varphi(x)=0$ for $x>K$.  
Then $\varphi^{-1}(x)=K(1-x)$, and so $A=K(1-d_2/b_2)$, $B=-K$. 
From (\ref{e:wkw5}) it follows that if 
$2b_2/b_1<b_2-d_2<2$, $d_1=b_1d_2/b_2-\beta$ and $\beta$ is  sufficiently small, then there exists a periodic solution. 
Let us consider a special example with the following coefficients 
$b_1=2.02$, $b_2=0.505$,  $d_1=0.0399$, $d_2=0.01$. Then $b_1/d_1>b_2/d_2$, $\alpha= b_1/b_2=4$, $\beta=\alpha d_2-d_1=0.0001$, $\gamma=-2.9999$,
and $\theta\approx 1.00408$.
Then all conditions hold and a positive periodic solution exists.
If $K=1$ then the periodic sequence is given by  $x_1(2n-1)\approx 0.8482$,
$x_1(2n)\approx 0.8622$,
$x_2(2n-1)\approx 0.1099$,  $x_2(2n)\approx 0.1103$  
 for $n\in\mathbb N$. 
 It is interesting that in this case,  one-dimensional models
(i.e., with the birth and death coefficients $b_1,d_1$ or $b_2,d_2$) have positive and globally stable fixed points
because  $b_i<2+d_i$ for $i=1,2$ (see Section~\ref{ss:loc-stab}).
Hence the two-dimensional  model has a locally stable fixed point $(1-d_1/b_1,0)$ but this point is not globally stable. 
\end{example}

\begin{example} 
\label{ex2}
We consider now the two-dimensional Beverton-Holt model  with harvesting, i.e., a model of type (\ref{d-m}) with 
$f(\mathbf x)=\varphi(x_1+x_2)$ and $\varphi(x)=c/(c+x)$ for $x\in [0,\infty)$, $c>0$.
 A one-dimensional version of this model always has one positive fixed point and this point is globally asymptotically stable 
(see Section~\ref{ss:loc-stab}). 
We have  
$\varphi^{-1}(x)=c/x-c$, and so $A=c(b_2/d_2-1)$, $B=-cb_2^2/d_2^2$. 
Inequality (\ref{e:wkw5})  takes the form
\[
(b_2-d_2)+O(\varepsilon)<2b_2/d_2<\alpha (b_2-d_2)+O(\varepsilon).
\]
The first  inequality is automatically fulfilled for sufficiently small $\beta$.
The second inequality holds if 
$b_1>\dfrac{2b_2^2}{(b_2-d_2)d_2}$
and $\beta$ is sufficiently small
and then a positive periodic solution exists.
\end{example}

\begin{example} 
\label{ex3}
We consider now the two-dimensional model (\ref{d-m}) related to the Ricker  map, i.e., with 
$f(\mathbf x)=\varphi(x_1+x_2)$ and $\varphi(x)=e^{-cx}$ for $x\in [0,\infty)$.
We have  
$\varphi^{-1}(x)=- c^{-1}\ln x$, $(\varphi^{-1})'(x)=- (cx)^{-1}$ and 
so $A=c^{-1}\ln(b_2/d_2)$,  $B=-b_2/(cd_2)$. 
Inequality (\ref{e:wkw5})  takes the form
\[
\ln(b_2/d_2) +O(\varepsilon)<2/d_2<\alpha \ln(b_2/d_2)+O(\varepsilon).
\]
Thus if  $d_2e^{2/(\alpha d_2)}<b_2<d_2e^{2/d_2}$
and $\beta$ is sufficiently small, then a positive periodic solution exists.
Now we give an  example 
when $T$  have a positive periodic point
and  
both one-dimensional models have  globally stable fixed points,
i.e.,  $b_r<d_re^{2/d_r}$  holds (see Section~\ref{ss:loc-stab}).
 Let $b_1=1.0001e^2$, $b_2=b_1/4$,  $d_1=0.9999$, $d_2=0.25$. 
The coefficients  $\alpha= b_1/b_2=4$, $\beta=\alpha d_2-d_1=0.0001$, $\gamma=-2.9999$
are the same as in Example ~\ref{ex1}. Thus $\theta\approx 1.00408$, $\theta^{-1}\approx 0.99594$,  
and $\alpha\theta+\gamma=1.01642$.
For $c=1$ we have $p_i=-\ln m_i$ for $i=1,2$ and we can check that
the periodic sequence is given by  $x_1(2n-1)\approx 1.49009$,
$x_1(2n)\approx 1.51455$,
$x_2(2n-1)\approx 0.000868$,  $x_2(2n)\approx 0.000871$  
 for $n\in\mathbb N$. 
\end{example}
\begin{remark}
\label{r:per}
We have restricted examples only to $f$ of the form $f(\mathbf x)=\varphi(x_1+x_2)$ with the typical $\varphi$ used in the classic 
competition models, to show that these models can have no coexistence equilibrium,  but they can have a positive periodic solution. 
Formula (\ref{ukl-3'}) can be used to find periodic solutions of models with  other $f$'s. 
\end{remark}

\subsection{Stability of  fixed points} 
\label{ss:loc-stab}
In the previous sections  we use some results concerning local and global stability of the transformation $T$
and  to make our exposition self-contained we add a section concerning this subject.
First we look for fixed points of the transformation $T$ and  check their local stability. We assume that $f(\mathbf 0)=1$ and 
\begin{equation}
\label{strict}
L_1>L_2>\dots>L_k.
\end{equation}  
Let $\mathbf x^*$ be a  fixed  point of $T$, i.e., 
$T({\mathbf  x}^*)={\mathbf  x}^*$.
Then 
$x^*_i=0$ or $f(\mathbf x^*)=d_i/b_i=L_i^{-1}$ for $i=1,\dots,k$.
From (\ref{strict}) it follows that  $\mathbf x^*$ is a fixed point of $T$ if 
 $\mathbf x^*=\mathbf x^*_0=\mathbf 0$ or $\mathbf x^*=\mathbf x^*_r=(0,\dots,x^*_r,\dots,0)$, 
 where $r\in\{1,\dots,k\}$ and $f(\mathbf x^*_r)=L_r^{-1}$.
 We assume that the functions $x\mapsto f(0,\dots,x,\dots,0)$ 
are strictly decreasing. 
Then $T$ has exactly $k+1$ fixed points $ \mathbf x^*_r$, $r=0,\dots,k$.
Let $A_r$ be the matrix with
 $a_{ij}^r=\dfrac{\partial T_i}{\partial x_j}({\mathbf  x_r^*})$. We have 
\begin{equation} 
\label{wzpoch}
  \dfrac{\partial T_i}{\partial x_j}({\mathbf  x})
=\delta_{ij}(1-d_i+b_if(\mathbf x))+b_ix_i\dfrac{\partial f}{\partial x_j}(\mathbf x),
\end{equation} 
where  $\delta_{ii}=1$ and $\delta_{ij}=0$ if $i\ne j$. Since 
$f(\mathbf 0)=1$ we obtain $a_{ii}^0=1-d_i+b_i>1$ and $a_{ij}^0=0$ if $i\ne j$,
and therefore  $\mathbf x_0^*=\mathbf 0$ is a repulsive fixed point.
Now we consider a point $\mathbf x_r$ with $r>0$.
Then $f(\mathbf x_r)=d_r/b_r$ and from (\ref{wzpoch}) we obtain
 \[
 a_{ij}^r=\dfrac{\partial T_i}{\partial x_j}(\mathbf  x_r^*)=
 \begin{cases} 
 \delta_{ij}(1-d_i+b_id_r/b_r),  \quad \textrm{if $i\ne r$},\\
 \delta_{ij}+b_rx_r\dfrac{\partial f}{\partial x_r}(\mathbf x_r^*),
  \quad \textrm{if $i= r$}.
\end{cases} 
\]
The matrix $A_r$ has $k$-eigenvalues $\lambda_i$, $i=1,\dots,k$ and $\lambda_i=a^r_{ii}$. 
 We have
\[
\begin{aligned}
\lambda_i&=1-d_i+b_id_r/b_r=1 +b_i(L_r^{-1}-L_i^{-1}),\textrm{ if $i\ne r$},\\
\lambda_r&=1+b_rx_r\dfrac{\partial f}{\partial x_r}(\mathbf x_r^*).
\end{aligned}
\]
 Observe that if $r=1$  then  $\lambda_i\in (0,1)$ for $i>1$. If
 we assume that $-2<b_1x_1\dfrac{\partial f}{\partial x_1}(\mathbf x_1^*)<0$, then also  $\lambda_1\in (0,1)$ and the fixed point $\mathbf x_1^*$ is locally asymptotically stable.
 If $r>1$  then  $\lambda_i>1$ for $i<r$ and, consequently,  
 the fixed point $\mathbf x_1^*$ is not asymptotically stable.
 But if 
  $-2<b_rx_r\dfrac{\partial f}{\partial x_r}(\mathbf x_r^*)<0$,
 the point $\mathbf  x_r^*$ is  locally \textit{semi-stable}, i.e.,  is stable for the transformation $T$ restricted to the set 
\[
X_r=\{{\mathbf  x}\in X \colon \,x_1=\dots=x_{r-1}=0\}.
\]

In the case of the logistic map 
$f(\mathbf x)=1-(x_1+\dots +x_k)/K$
we have $x_r=K(1-d_r/b_r)$, $\dfrac{\partial f}{\partial x_r}(\mathbf x_r^*)=-1/K$,
and conditions for stability (or semi-stability) of $\mathbf x_r^*$  reduce to $b_r<2+d_r$.
If the positive fixed point  $x^*$ of a one-dimensional  logistic map is locally asymptotically stable
  then this point is globally stable, i.e., $T^n(x)\to x^*$, for $x\in (0,K)$. 
Thus, Example~\ref{ex1}  shows that the behaviour of 
a $k$-dimensional logistic map
 and its one-dimensional restrictions
can be different. It can have a locally stable
fixed  point  $\mathbf x_1^*$ but this point can be not globally asymptotically stable. 

Consider a  model with the Beverton-Holt birth rate 
\[
f(\mathbf x)=\dfrac{c}{c+x_1+\dots +x_k}.
\]
Then we have $x_r=c\left(\dfrac{b_r}{d_r}-1\right)$, $\dfrac{\partial f}{\partial x_r}(\mathbf x_r^*)=-\dfrac{c}{(c+x_r)^2}$. 
Conditions for stability (or semi-stability) of $x_r$ reduce to the inequality $b_rx_rc< 2(c+x_r)^2$ 
or, equivalently to $b_r^2/d_r^2>b_r^2/d_r-b_r$, which  always holds because $0\le d_r\le 1$.
The positive fixed point $x^*$ of the one-dimensional map $T$ is globally stable because $x<T(x)<x^*$ for $x\in (0,x^*)$ and $T(x)<x$ for  $x>x^*$.
Example~\ref{ex2}  shows  that a two-dimensional  map have a locally stable
fixed  point  $\mathbf x_1$ but this point can be not globally asymptotically stable.

In the case of the Ricker map $f(\mathbf x)=e^{-c(x_1+\dots +x_k)}$
we have $x_r=\dfrac 1c\ln\dfrac{b_r}{d_r}$, $\dfrac{\partial f}{\partial x_r}(\mathbf x_r^*)=-c\dfrac{d_r}{b_r}$,
and conditions for stability (or semi-stability) of $x_r$  reduce to $cd_rx_r>2$, which takes place when 
$b_r<d_re^{2/d_r}$. The last inequality is also sufficient for global stability of the fixed point (see  e.g. \cite[Th.\ 9.16]{Thieme}). 

\section{Conclusion}
In this paper we consider a discrete time strong competition model.  While in its continuous counterpart 
a population having the maximal turnover coefficient drives the other to extinction, the discrete time model can have no this property. 
We give sufficient conditions for competitive exclusion for a discrete model.
Although this model does not have a coexistence equilibrium,  it can have a positive periodic solution. 
It is interesting that this periodic solution can exist in the case when the restrictions of the model to one dimensional cases 
have globally stable stationary solutions. 
Theorem~\ref{th1} can be also applied to models when the suppression function $f$ depends on other factors,
for example the suppression function $f$ can include  resource density. 
It would be interesting to generalize Theorem~\ref{th1}  to  models with weaker
competition, i.e., when the suppression function is not identical for all  subpopulations, 
or to discrete-continuous hybrid models \cite{GHL,ML} or  to equations on time scales \cite{BP}.

\section*{Acknowledgments}
The author is grateful to Dr. Magdalena Nockowska for several helpful discussions while this work was in progress. 
This research was partially supported by 
the  National Science Centre (Poland)
Grant No. 2014/13/B/ST1/00224.

\end{document}